\newtheorem{thm}{Theorem}[section]
\newtheorem{lemma}[thm]{Lemma}
\theoremstyle{definition}
\newtheorem{remark}[thm]{Remark}
\theoremstyle{remark}
\DeclareMathOperator{\Fix}{Fix}
\DeclareMathOperator{\Diff}{Diff}
\DeclareMathOperator{\mcg}{MCG}
\DeclareMathOperator{\Homeo}{Homeo}
\DeclareMathOperator{\Int}{int}
\DeclareMathOperator{\Cent}{Cent}
\DeclareMathOperator{\genus}{genus}
\newcommand{\R}{\mathbb R}
\newcommand{\Z}{\mathbb Z}
\def\ti{\tilde}
\def\sinfty{S_{\infty}}
\def\sl3z{SL(3, \mathbb Z)}
\def\G{{\cal G}}
\def\H{{\cal H}}
\def\L{{\cal L}}
\title{Global fixed points for centralizers and Morita's Theorem}
\author{John Franks\thanks{Supported in part by NSF grant DMS-055463.} ,\ \ 
Michael Handel\thanks{Supported in part by NSF grant DMS-0103435.}}
\begin{document}
\maketitle
\begin{abstract}
We prove a global fixed point theorem for the centralizer of a
homeomorphism of the two dimensional disk $D$ that has
attractor-repeller dynamics on the boundary with at least two
attractors and two repellers.  As one application, we show that there
is a finite index subgroup of the centralizer of a pseudo-Anosov
homeomorphism with infinitely many global fixed points.  As another
application we give an elementary proof of Morita's Theorem, that the
mapping class group of a closed surface $S$ of genus $g$ does not lift
to the group of diffeormorphisms of $S$ and we improve the lower bound
for $g$ from $5$ to $3$.  \end{abstract}

\section{Introduction}

In this article we are concerned with the properties of
groups of homeomorphisms or diffeomorphisms of surfaces.  We assume
throughout that $S$ is a surface of finite negative Euler
characteristic, without boundary but perhaps with punctures.  We
denote the group of orientation preserving homeomorphisms of $S$ and
the group of orientation preserving $C^1$ diffeomorphisms of $S$ by
$\Homeo(S)$ and $\Diff(S)$ respectively and we denote the subgroups
consisting of elements that are isotopic to the identity by
$\Homeo_0(S)$ and $\Diff_0(S)$ respectively.

An important tool in the study of subgroups of these groups is the
existence of a global fixed point.  A {\em global fixed point} for a
subgroup $\G$ of $\Homeo(S)$ is a point $x \in S$ that is fixed by
each element of $\G$.  The set of global fixed points for $\G$ is
denoted $\Fix(\G)$.  When $G$ is a subgroup of $\Diff(S)$ and
$\Fix(\G)$ is non-empty, the assignment $g \mapsto Dg_x$ (the
derivative of $g$ at $x \in \Fix(\G)$), gives a representation of $\G$
in $Gl(2,\R)$.  This representation can be very useful for
understanding $\G$;  for example in \cite{FH} this representation was
used to prove that many lattices, including $SL(3,\Z),$ are not isomorphic
to a subgroup of the group of measure preserving
diffeomorphisms of a surface.

Unfortunately, there are no general techniques for finding a global
fixed point for a subgroup of $\Homeo(S)$.  In particular there are
no analogues of the standard tools of algebraic topology for finding
fixed points of a single map.  In the case of surfaces there is a 
literature concerning the existence of global fixed points,
but it is largely limited to abelian groups (see, e.g.
\cite{B1}, \cite{B2}, \cite{H}, \cite{FHP}, and \cite{FHP2}).

The main objective of this article is to provide a technique which,
in many cases, allows us to find a global fixed point for the centralizer
of $f \in \Homeo(S)$.  We denote the centralizer of $f$ by $\Cent(f)$ and
observe that it can be a very large group and, in particular, is generally far
from abelian.

As one application of this result we address the ``lifting problem''
for the mapping class group (see \S 6 of \cite{farb}).  Using
our result on global fixed points and the representation in $Gl(2,\R)$
mentioned above, we give an elementary proof of an important theorem of Morita
about the non-existence of liftings of the full mapping class group to
$\Diff(S)$ and improve the lower bound on the genus of $S$ required
for the result (see Theorem~\ref{morita} below).

The closed two dimensional disk is denoted $D$.  The universal cover
$\ti S$ of $S$ is naturally identified with $\Int D$ and the
compactification of $\ti S$ by the circle at infinity $\sinfty$ is
naturally identified with $D$.  For this reason, our main result
concerns global fixed points for group actions on $D$.
  
\begin{thm}\label{thm: global fixed point}
Let $\G$ be a subgroup of $\Homeo(D)$ and let $f$ be an
element of the center of $\G.$
Suppose $K:= \Fix(f) \cap \partial D$ consists of a finite set 
with more than two elements each of which is either an
attracting or repelling fixed point for $f: D \to D.$
Let $\G_0 \subset \G$ denote the finite index subgroup
whose elements pointwise fix $K$.
Then $\Fix(\G_0) \cap \Int(D)$ is non-empty.
\end{thm}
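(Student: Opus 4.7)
The plan is to exploit the attractor--repeller structure of $f$ on $\partial D$ to carve $D$ into canonical $G_0$--invariant pieces and then locate a globally fixed point inside one of them.

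First, since $f$ is orientation preserving and each point of $K$ is attracting or repelling for $f:D\to D$, the points of $K$ must alternate between attractors and repellers around $\partial D$; label them cyclically $r_1,a_1,r_2,a_2,\dots,r_k,a_k$ with $k\geq 2$. Any $g\in G_0$ commutes with $f$ and pointwise fixes $K$, so $g$ preserves the attracting basin $B(a_i):=\{x\in D : f^n(x)\to a_i\}$ of each $a_i$ and the corresponding repelling basin $B^-(r_j):=\{x\in D : f^{-n}(x)\to r_j\}$ of each $r_j$.

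Next, I would consider the $f$-- and $G_0$--invariant closed set
\[
X \;:=\; D \,\setminus\, \Bigl(\,\bigcup_{i=1}^k B(a_i)\;\cup\;\bigcup_{j=1}^k B^-(r_j)\,\Bigr),
\]
whose intersection with $\partial D$ is contained in $K$ and whose bulk lies in $\Int(D)$. The connected components of $D\setminus X$ that meet the boundary partition the disk into canonical ``sectors,'' each adjacent to one arc of $\partial D$ between a repeller $r_j$ and an adjacent attractor $a_i$. Because $G_0$ pointwise fixes $K$ and respects the alternating combinatorics on the boundary, every $g\in G_0$ must preserve each sector individually, along with its frontier in $\Int(D)$.

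In each sector I would then try to produce a fixed point of $f$ by a Brouwer or fixed-point-index argument: the local index of $f$ along a boundary arc from an $r_j$ to an adjacent $a_i$ is controlled by the attractor/repeller type, and a global index count on the closure of a sector forces at least one interior fixed point of $f$ in it. Since sectors are preserved individually by $G_0$, the set of interior fixed points of $f$ in any one sector is $G_0$--invariant.

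The main obstacle is the extraction of a single globally fixed point from this invariant set. A naive index argument yields only an invariant subset of $\Fix(f)\cap\Int(D)$, not a canonical point; one must refine by passing to a minimal $G_0$--invariant compact subset and then exploit additional canonical data --- for instance Brouwer translation arcs joining the boundary fixed points across a sector, or prime-end rotation data at interior accumulation points --- to force that minimal set to be a single point. Handling the case where $\Fix(f)\cap\Int(D)$ is infinite while still pinning down a point truly fixed by every element of $G_0$ is where the bulk of the technical work will lie.
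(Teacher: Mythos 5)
There is a genuine gap, and you have in fact located it yourself: everything in your outline up to the last paragraph only reproduces the easy part of the statement, and the step you defer (``pass to a minimal $\G_0$--invariant compact subset and \dots force that minimal set to be a single point'') is the entire content of the theorem, with no mechanism supplied. The invariance of the basins $B(a_i)$, $B^-(r_j)$ under $\G_0$ is correct (conjugation by $g$ carries the basin of $a_i$ for $f$ to the basin of $g(a_i)=a_i$ for $f$), and an index count does give interior fixed points of $f$; but $\Fix(f)\cap\Int(D)\neq\emptyset$ is already noted in the paper as elementary, and a $\G_0$--invariant subset of it is far from a global fixed point. Nothing forces a minimal $\G_0$--invariant compact set to be a point: a priori it could be, say, an invariant continuum or circle on which $\G_0$ acts without fixed points, and neither ``Brouwer translation arcs'' nor ``prime-end rotation data'' is actually brought to bear in your sketch. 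Moreover your sector picture gives no canonical object attached to a single element of $\G_0$, whereas the difficulty is precisely to produce one point canonical enough to be fixed by \emph{every} element.

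For comparison, the paper's proof supplies exactly the two ingredients your outline lacks. First, an algebraic--dynamical reduction: if $g$ commutes with $f$ and fixes an attracting fixed point of $f$, then $f^mg$ has that point as an attractor for some $m>0$; hence it suffices to find a common fixed point for the family $\H$ of elements of $\langle \G_0,f\rangle$ having all $p_i$ attracting and all $q_j$ repelling, since $\Fix(f)\cap\Fix(f^mg)\subset\Fix(g)$. Second, a prime-end argument: after doubling to $S^2$, the basin $U$ of one attractor $p_1$ is the same for every $h\in\H$ (by commutation), is open, connected and simply connected with nondegenerate frontier, and one passes to its prime end compactification $\widehat U$. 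The invariant boundary arc from $p_1$ to the adjacent repeller determines a prime end $\hat w$ fixed by every $\hat h$ and shown, via Mather's results, to be repelling on the circle of prime ends; the endpoint $\hat y$ of its common interval of repulsion is then a canonical prime end fixed by all $\hat h$ and not arising from $\partial D$. Mather's crosscut theorems convert $\hat y$ into an honest point $y$ in the frontier of $U$: crosscuts $\alpha_i\to y$ with $\hat\alpha_i\to\hat y$ must satisfy $h(\alpha_i)\cap\alpha_i\neq\emptyset$ for all $h\in\H$ (because $\hat\alpha_i$ separates $\hat y$ from $p_1$ and the endpoints of $\hat\alpha_i$ drift to $\hat y$, $\hat w$ while interior points drift to $p_1$), so $y\in\Fix(h)$ for every $h$, and $y$ is neither attracted nor repelled by any $h$, hence lies off $K$ and yields the interior global fixed point. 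This conversion of a canonical fixed prime end into an actual common fixed point is the missing idea; the index count and sector decomposition in your proposal do not substitute for it.
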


The hypothesis of this theorem has both an algebraic part, namely that
$\G_0 \subset \Cent(f)$, and a dynamical part, namely that
$f|_{\partial D} $ has attractor-repeller dynamics.  The latter
implies that $\Fix(f) \cap \Int D \ne \emptyset$.  The former is used
to relate the dynamics of elements of $\G_0$ to $f$ and hence to each
other.

The fixed point set $\Fix(f)$ of $f \in \Homeo(S)$ is partitioned into
{\em Nielsen classes}.  Two elements $x,y \in \Fix(f)$ belong to the
same Nielsen class if there is a lift $\ti f:\ti S \to \ti S$ of $f :
S \to S$ and lifts $\ti x, \ti y \in \Fix(\ti f)$ of $x$ and $y$.
Equivalently, every lift of $f$ that fixes a lift of $x$ also fixes a
lift of $y$.  If $f$ is isotopic to $g$ and $\ti f$ is a lift of $f$
then the isotopy between $f$ and $g$ lifts to an isotopy between $\ti
f$ and a lift $\ti g$ of $g$.  We say that $\ti f$ and $\ti g$ are
{\em paired}.  Pairing defines a bijection between lifts of $f$ and
lifts of $g$ and we say that the $f$-Nielsen class of $x \in \Fix(f)$
{\em is paired} with the $g$-Nielsen class of $z \in \Fix(g)$ if there
are paired lifts $\ti f$ and $\ti g$ of $f$ and $g$ and there are
lifts $\ti x \in \Fix(\ti f)$ of $x$ and $\ti z \in \Fix(\ti g)$ of
$z$.

  By a pseudo-Anosov homeomorphism of a punctured surface we mean the restriction of a homeomorphism of the unpunctured surface that is pseudo-Anosov relative to the set of  punctures.

\begin{thm} \label{cor:pA centralizer} 
Suppose that $\alpha \in \Homeo(S)$ is pseudo-Anosov and that $f\in
\Homeo(S)$ is isotopic to $\alpha$. Let $\H_0 = \Cent (f) \cap
\Homeo_0(S)$.  Then $\Fix(\H_0)$ is infinite.  More precisely, for
each $n \ge 0$ and each $y \in \Fix(\alpha^n)$ there exists
$x\in\Fix(\H_0) \cap \Fix(f^n)$ such that the Nielsen
class of $f^n$ determined by $x$ is paired with the Nielsen class of
$\alpha^n$ determined by $y$.
\end{thm}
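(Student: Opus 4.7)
The plan is to apply Theorem~\ref{thm: global fixed point} in the universal cover $\ti S = \Int D$. Fix $n \ge 0$ and $y \in \Fix(\alpha^n)$, pick a lift $\ti y \in \Int D$, and let $\ti \alpha_*$ be the unique lift of $\alpha^n$ satisfying $\ti\alpha_*(\ti y) = \ti y$. Because $\alpha$ is pseudo-Anosov, standard Nielsen--Thurston theory identifies $\Fix(\ti\alpha_*) \cap \partial D$ with the endpoints at infinity of the stable and unstable prongs through $\ti y$: it is finite, of cardinality at least four, and every point is either an attractor or a repeller of $\ti\alpha_*\colon D \to D$, alternating around $\partial D$.

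Since $f$ is isotopic to $\alpha$, and hence $f^n$ to $\alpha^n$, an isotopy lifts to an isotopy between $\ti\alpha_*$ and a paired lift $\ti f_*$ of $f^n$. The lifted isotopy has uniformly bounded tracks, so $\ti f_*$ lies a bounded distance from $\ti\alpha_*$ on $\ti S$; consequently $\ti f_*$ extends to $D$ and agrees with $\ti\alpha_*$ on $\partial D$. Set $K := \Fix(\ti f_*) \cap \partial D = \Fix(\ti\alpha_*) \cap \partial D$; this is a finite set with more than two elements, each attracting or repelling for $\ti f_*$, so the boundary hypotheses of Theorem~\ref{thm: global fixed point} are satisfied.

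To build the ambient group, recall that every $h \in \H_0$ is isotopic to the identity and therefore has a canonical lift $\ti h$ that commutes with the deck group; equivalently, $\ti h$ is at bounded distance from the identity on $\ti S$, so it extends to the identity on $\partial D$ and in particular fixes $K$ pointwise. Because $h$ commutes with $f^n$, the map $\ti h \, \ti f_* \, \ti h^{-1} \, \ti f_*^{-1}$ is a deck transformation; since it restricts to the identity on $\partial D$ and the deck group acts faithfully there, this commutator is trivial, so $\ti h$ commutes with $\ti f_*$. Let $\G$ be the subgroup of $\Homeo(D)$ generated by $\ti f_*$ and by $\{\ti h : h \in \H_0\}$. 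Then $\ti f_*$ is central in $\G$, and every element of $\G$ pointwise fixes $K$, so $\G = \G_0$ in the notation of Theorem~\ref{thm: global fixed point}.

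Theorem~\ref{thm: global fixed point} now produces a point $\ti x \in \Int D$ fixed by $\G$; its projection $x \in S$ is fixed by every $h \in \H_0$ and by $f^n$, and the Nielsen class of $x$ as a fixed point of $f^n$ is determined by the lift $\ti f_*$, which is paired with $\ti\alpha_*$, so this class is paired with the Nielsen class of $y$. Infiniteness of $\Fix(\H_0)$ follows because $|\Fix(\alpha^n)|$ grows exponentially in $n$ for pseudo-Anosov $\alpha$, distinct Nielsen classes of $\alpha^n$ pair with distinct Nielsen classes of $f^n$, and distinct Nielsen classes yield distinct fixed points in $S$. The main technical obstacle is the bookkeeping around paired lifts, together with the verification that $\ti f_*$ inherits the pseudo-Anosov attractor-repeller dynamics on $\partial D$; once these are in place, Theorem~\ref{thm: global fixed point} does the real work.
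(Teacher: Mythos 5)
Your argument is essentially the paper's own proof: the paper proves this theorem jointly with Theorem~\ref{pAcomponent}, and your write-up is exactly the special case $S_0=S$ --- canonical lifts of elements of $\H_0$ that are the identity on the circle at infinity, the lift of $f^n$ paired with the lift of $\alpha^n$ fixing $\ti y$ and inheriting its finite attractor--repeller fixed set on $\partial D$ (via \cite{CB}, \cite{HT}), the commutator-is-a-deck-transformation argument to get commutation of the lifts, and then an application of Theorem~\ref{thm: global fixed point} with $\G=\G_0$. The only additions are cosmetic (you spell out the deduction of infiniteness from the ``more precisely'' statement), so no further comparison is needed.
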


In the special case that $f = \alpha$, the group $\H_0$ is trivial (see,
e.g. \cite{FLP}) and Theorem~\ref{cor:pA centralizer} is an immediate
consequence of the fact that $\alpha$ has the fewest possible periodic
points in its isotopy class.  Thus Theorem~\ref{cor:pA centralizer}
fits into the general scheme of results in surface dynamics in which
an important property of pseudo-Anosov maps is extended to all
elements of its isotopy class.
 
\begin{remark} 
Suppose that $f' :T^2 \to T^2$ is isotopic to a linear Anosov
homeomorphism $\alpha' :T^2 \to T^2$ and that $e \in\Fix(\alpha)$ is
the image of $(0,0) \in \R^2$ under the usual covering map.  Let $S =
T^2 \setminus \{e\}$, let $f=f'|_{S}$ and let $\alpha = \alpha'|_{S}$.
Then $\alpha$ is pseudo-Anosov and we may apply Theorem~\ref{cor:pA
centralizer}.  The conclusions are exactly as given in the theorem but
are applied to the subgroup $\H'_0$ of $\Cent(f')$ that are isotopic
the identity relative to $e$.
 \end{remark}

Our next result is the analogue of Theorem~\ref{cor:pA centralizer}
for reducible isotopy classes with a pseudo-Anosov component.  It
is a corollary of, and the original motivation for, Theorem~\ref{thm:
global fixed point}, since it provides the tool we use to prove
Morita's theorem.

\begin{thm}  \label{pAcomponent}Suppose that $f\in \Homeo(S)$, that $S_0 \subset S$ is an incompressible subsurface and that $f$ is isotopic to $\alpha \in \Homeo(S)$ where $\alpha(S_0) = S_0$ and   $\alpha|_{S_0}$ is pseudo-Anosov.    Let $\H_0$ be the subgroup of $\Cent (f)$ consisting of elements that are isotopic to a homeomorphism  that pointwise fixes  $S_0$.        Then $\Fix(\H_0)$ is infinite.  More precisely,  for each $n \ge 0$ and each   $y \in \Fix(\alpha^n) \cap  S_0$ there  exists $x\in\Fix(\H_0) \cap \Fix(f^n)\cap S$  such that the Nielsen class of $f^n$ determined by $x$ is paired with the Nielsen class of $\alpha^n$ determined by $y$.
\end{thm}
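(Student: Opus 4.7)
The plan is to apply Theorem~\ref{thm: global fixed point} on the disk $D_0$, the compactification of the universal cover $\ti S_0$ of $S_0$ by its own circle at infinity $\partial D_0$. Fix $n \ge 1$ and $y \in \Fix(\alpha^n) \cap S_0$; lift $y$ to $\ti y \in \ti S_0$, and let $\ti\alpha^n$ denote the lift of $\alpha^n$ fixing $\ti y$. Since $\alpha^n$ preserves $S_0$, this lift preserves $\ti S_0$ and, by Nielsen--Thurston theory for the pseudo-Anosov $\alpha|_{S_0}$, extends continuously to $D_0$ with attractor--repeller dynamics on $\partial D_0$. The boundary fixed points are the endpoints of the stable and unstable prongs through $\ti y$---four at a regular foliation point, $2k$ at a $k$-prong singularity---so in any case $K := \Fix(\ti\alpha^n)\cap\partial D_0$ has at least four elements. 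Let $\ti f^n$ be the lift of $f^n$ paired with $\ti\alpha^n$ via the lifted isotopy from $\alpha$ to $f$; since the isotopy has bounded tracks in $\ti S$, $\ti f^n$ and $\ti\alpha^n$ agree on the circle at infinity, so $\ti f^n$ extends to $D_0$ with the same action as $\ti\alpha^n$ on $\partial D_0$ and in particular fixes $K$ pointwise with the same attractor/repeller type.

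For each $h \in \H_0$ fix an isotopy in $\Homeo(S)$ from $h$ to some $h_0$ with $h_0|_{S_0} = \mathrm{id}$, and lift it starting from the unique lift of $h_0$ that restricts to the identity on $\ti S_0$. The endpoint is a lift $\ti h$ of $h$ which, by the same bounded-tracks reasoning, extends to $D_0$ as the identity on $\partial D_0$. One must then verify that $\ti h$ and $\ti f^n$ genuinely commute on $D_0$: from $hf = fh$ we have $\ti h \, \ti f^n = T \, \ti f^n \, \ti h$ for some deck transformation $T$ of $\ti S \to S$ stabilizing $\ti S_0$. Restricting to $\partial D_0$ shows $T$ fixes each of the $|K| \ge 4$ boundary points. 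But a nontrivial element of the deck group $\pi_1(S_0)$ of $\ti S_0 \to S_0$ acts hyperbolically on $\partial D_0$ with exactly two fixed points, so $T = \mathrm{id}$.

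Apply Theorem~\ref{thm: global fixed point} to the subgroup $\G \subset \Homeo(D_0)$ generated by $\ti f^n$ and the lifts $\{\ti h : h \in \H_0\}$, with $\ti f^n$ in the center. Since each $\ti h$ already fixes $K$ pointwise, the finite-index subgroup $\G_0$ in the theorem contains every $\ti h$, so there is a point $\ti x \in \Int(D_0) = \ti S_0$ fixed by $\ti f^n$ and all $\ti h$. Setting $x = p(\ti x) \in S_0 \subset S$ gives $x \in \Fix(\H_0) \cap \Fix(f^n)$, and the pairing of lifts $\ti f^n \leftrightarrow \ti\alpha^n$ at $\ti x \leftrightarrow \ti y$ is by definition the required pairing of Nielsen classes. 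As $n$ and $y$ range over the infinitely many Nielsen classes of periodic points of the pseudo-Anosov $\alpha|_{S_0}$, the resulting $x$'s are distinct, so $\Fix(\H_0)$ is infinite.

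The principal obstacle is the bookkeeping of lifts in paragraphs one and two: $f$ need not preserve $S_0$, so some care is needed to show that the paired lift $\ti f^n$ nevertheless extends to a homeomorphism of $D_0$ with the correct boundary action, and dually that $\ti h$ can be chosen to act as the identity on $\partial D_0$. Both assertions reduce to the fact that the lifted isotopies from $\alpha$ to $f$ and from $h_0$ to $h$ have uniformly bounded tracks in $\ti S$ and hence act trivially on the circle at infinity. Once set up, the deck-transformation argument in paragraph two uses precisely the ``more than two boundary fixed points'' hypothesis of Theorem~\ref{thm: global fixed point} to upgrade commutation-mod-deck to genuine commutation, after which the global fixed point theorem completes the proof.
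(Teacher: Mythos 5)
There is a genuine gap at the heart of your setup: you apply Theorem~\ref{thm: global fixed point} to a group of homeomorphisms of $D_0$, the compactified (universal) cover of $S_0$, but the maps you feed into it are not self-maps of $D_0$. Neither $f$ nor the elements $h\in\H_0$ preserve $S_0$ --- only their isotopy representatives $\alpha$ and $h_0$ do --- and the theorem must be applied to $f^n$ and $h$ themselves, since the conclusion concerns their actual fixed points. Consequently the lift $\ti f^{\,n}$ of $f^n$ to $\ti S$ that is paired with $\ti\alpha^n$ satisfies $\ti f^{\,n}(\ti S_0)\ne \ti S_0$ in general, and likewise $\ti h(\ti S_0)\ne \ti S_0$. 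The bounded-tracks remark controls only the action on the circle at infinity of $\ti S$; it does not produce any induced homeomorphism of $D_0$, because interior points of $\ti S_0$ near a lifted boundary geodesic of $S_0$ are simply carried outside $\ti S_0$. So the group $\G$ to which you want to apply the global fixed point theorem does not act on $D_0$ at all, and the ``principal obstacle'' you flag in your final paragraph is not resolved by the argument you sketch, which addresses boundary behavior but not the existence of the disk maps themselves.

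The paper's proof avoids this by never leaving the compactified universal cover $D=\ti S\cup S_{\infty}$ of the whole surface, on which every lift of every element of $\Homeo(S)$ does extend. The component $\ti S_0$ of the preimage of $S_0$ enters only through the Cantor set $C=\cl(\ti S_0)\cap S_{\infty}$: each $h\in\H_0$ has a preferred lift $\hat h$ acting as the identity on $C$ (obtained, much as in your second paragraph, by lifting an isotopy from a representative that pointwise fixes $S_0$), and the lift $F$ of $f^n$ paired with the lift $A$ of $\alpha^n$ fixing $\ti y$ satisfies $\hat F|_{S_{\infty}}=\hat A|_{S_{\infty}}$, whose fixed set is a finite subset of $C$ with more than two points, each an attractor or repeller for $\hat A:D\to D$ (\cite{CB}, \cite{HT}). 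Commutation of $\hat F$ with each $\hat h$ is then obtained exactly in the spirit of your deck-transformation argument: $[\hat F,\hat h]$ is the extension of a lift of the identity that fixes $C$ pointwise, hence is trivial. Theorem~\ref{thm: global fixed point} is applied to $\langle\widehat{\H_0},F\rangle$ acting on $D$, not on $D_0$, and the resulting interior fixed point projects to the desired $x$. Your argument becomes essentially correct once it is transplanted to $D$ in this way; as written on $D_0$ it does not get off the ground.
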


 The {\em mapping class group} $\mcg(S)$ of a closed surface $S$ is
the group of isotopy classes of orientation preserving homeomorphisms
of $S$. There is a natural homomorphism $\Homeo(S) \to \mcg(S)$ that
sends $h \in \Homeo(S)$ to its isotopy class $[h] \in \mcg(S)$.  A
{\em lift }of a subgroup $\Gamma$ of $\mcg(S)$ is a homomorphism
$\Gamma \to \Homeo(S)$ so that the composition
\[
 \Gamma \to \Homeo(S)\to \mcg(S)
\]
is the inclusion.  Every free abelian subgroup (see \S 6.3 of
\cite{farb}) and every finite subgroup \cite{ker} of $\mcg(S)$ has a
lift to $\Diff(S)$.  Morita \cite{mor1}, \cite{mor2} proved that
$\mcg(S)$ does not lift to $\Diff(S)$ for $\genus(S) \ge 5$ and
Markovich \cite{mark} proved that $\mcg(S)$ does not lift to
$\Homeo(S)$ for $\genus(S)\ge 6$.

Using Theorem~\ref{pAcomponent} and the Thurston stability theorem we
give an elementary proof of Morita's theorem and improve the lower
bound on the genus.

 \begin{thm} \label{morita}  $\mcg(S)$ does not lift to $\Diff(S)$ for $\genus(S) \ge 3$.
\end{thm}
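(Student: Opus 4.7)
The strategy is to derive a contradiction from an assumed lift $\phi\colon \mcg(S)\to \Diff(S)$, using Theorem~\ref{pAcomponent} together with Thurston's stability theorem.  Choose an incompressible subsurface $S_0\subset S$ that supports a pseudo-Anosov and a mapping class $[\alpha]\in \mcg(S)$ represented by a homeomorphism $\alpha$ which is pseudo-Anosov on $S_0$ and the identity on $S_1 := \overline{S\setminus S_0}$.  Let $\Gamma\subset \mcg(S)$ be the subgroup of mapping classes represented by homeomorphisms pointwise fixing $S_0$; every such element commutes with $[\alpha]$ in $\mcg(S)$.  The heart of the setup is to arrange, when $\genus(S)\ge 3$, that $\Gamma$ contains a non-trivial, finitely generated, perfect subgroup $\Gamma_0$.

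Set $f = \phi([\alpha])$, so $\phi(\Gamma_0) \subset \H_0$ in the notation of Theorem~\ref{pAcomponent}.  Choose $y \in \Fix(\alpha)\cap S_0$ to be a regular pseudo-Anosov fixed point of $\alpha|_{S_0}$, whose singleton Nielsen class has Lefschetz index $-1$.  Theorem~\ref{pAcomponent} then yields $x\in\Fix(\phi(\Gamma_0))\cap \Fix(f)\cap S$ whose $f$-Nielsen class is paired with the $\alpha$-class of $y$; invariance of the Nielsen index under pairing, together with the isolation of $x$ in its class (achieved, if necessary, by passing to an iterate $\alpha^n$), forces $Df_x \in GL(2,\R)$ to carry Lefschetz index $-1$.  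In particular $\det(I - Df_x) < 0$, so $Df_x$ has two distinct real eigenvalues.

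Now consider the derivative representation $D_x\colon \Gamma_0 \to GL(2,\R)$, $[g]\mapsto D\phi([g])_x$.  Since $\phi([g])$ commutes with $f$, each $D_x([g])$ commutes with $Df_x$, and hence lies in the centralizer of $Df_x$ in $GL(2,\R)$.  Because $Df_x$ has two distinct real eigenvalues, this centralizer is the abelian $\R$-split torus of matrices diagonal in the eigenbasis.  Perfectness of $\Gamma_0$ therefore forces $D_x$ to factor through $\Gamma_0^{ab}=\{1\}$ and to be trivial, i.e.\ $D\phi([g])_x = I$ for every $[g]\in\Gamma_0$.  Thurston's stability theorem, applied to $\phi(\Gamma_0)$ acting by $C^1$-diffeomorphisms on a neighborhood of $x$ with identity derivative at $x$, then produces a non-trivial homomorphism $\phi(\Gamma_0)\to (\R^2, +)$.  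This contradicts $\Gamma_0$ being non-trivial and perfect.

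The main obstacle is the construction of the perfect subgroup $\Gamma_0$ at the critical genus $\genus(S) = 3$.  For $\genus(S)$ sufficiently large one can take $S_0$ small enough that $\mcg(S_1)$ itself is perfect by the classical theorem on the abelianization of surface mapping class groups (requiring $\genus(S_1) \ge 3$), but this naive version will only handle $\genus(S)\ge 5$.  Pushing the bound down to $\genus(S) \ge 3$ requires a more delicate construction of $\Gamma_0$, for instance as a commutator subgroup of $\Gamma$, or as a subgroup of $\Gamma$ generated by Dehn twists along a carefully chosen configuration of curves in $S_1$ whose defining relations rule out any non-trivial homomorphism to an abelian group.  A secondary technicality is arranging that the produced $x$ is isolated in its Nielsen class and carries the full index $-1$, handled by passing to a sufficiently high iterate $f^n$ and invoking standard Nielsen--Lefschetz bookkeeping.
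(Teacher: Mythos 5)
Your centralizer-of-$Df_x$ mechanism rests on a step that the theorems available cannot deliver. Theorem~\ref{pAcomponent} produces \emph{some} $x\in\Fix(\H_0)\cap\Fix(f^n)$ whose Nielsen class is paired with that of $y$, but it gives no control over the local structure of $f$ at $x$: that Nielsen class of $f^n$ may contain many fixed points, possibly non-isolated or degenerate, and only the \emph{total} index of the class is an invariant of pairing. Passing to an iterate of $\alpha$ does not help --- it enlarges the fixed set of the map and in no way forces the particular global fixed point $x$ supplied by the theorem to be isolated in its class. Even if $x$ were isolated with index $-1$, the inference $\det(I-Df_x)<0$ requires nondegeneracy; a degenerate fixed point with $Df_x=I$ can perfectly well have index $-1$. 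So the assertion that $Df_x$ has two distinct real eigenvalues, which is the pivot that makes the derivative representation land in an abelian centralizer, is a genuine gap. The paper needs no information about $Df_x$ at all: since the global fixed set of $\L(\Gamma_1)$ is infinite, it chooses a \emph{non-isolated} global fixed point $x$, and the direction of accumulating global fixed points gives a common eigenvector with eigenvalue $1$ for every $Dh_x$; then $h\mapsto\det(Dh_x)$ and the resulting unipotent off-diagonal entry $h\mapsto n_h$ give homomorphisms to $\R$ and $\Z$, and Thurston stability handles the case $Dh_x\equiv I$.

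The second gap is the group-theoretic input. Because your argument only rules out homomorphisms of $\Gamma_0$ into the abelian centralizer of a split $Df_x$ (a group with $2$-torsion, e.g. $-I$), you are forced to demand a \emph{perfect} $\Gamma_0$, and that is exactly where the genus bound collapses: for complementary pieces of genus $1$ or $2$ the relative mapping class group is not perfect ($H_1\cong\Z/12$, $\Z/10$), it is torsion-free, so it contains no finite perfect subgroup such as $A_5$, and you offer no construction --- nor is it clear one exists --- of a nontrivial finitely generated perfect subgroup; in particular the commutator subgroup of $\Gamma$ is not argued (or known) to be perfect. The paper's hypothesis in Theorem~\ref{thm:Gamma} is only that $H^1(\Gamma_1)$ has rank zero, i.e. finite abelianization, which by Korkmaz holds for the \emph{full} relative mapping class group of a genus-$1$ or genus-$2$ piece; taking $S_1$ of genus $2$ with one boundary circle and letting a genus-$1$ piece $S_2$ carry the pseudo-Anosov is what brings the bound down to genus $3$. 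As written, your plan would at best recover the theorem when the complementary piece has perfect mapping class group, i.e. roughly the old genus $\ge 5$ range, and does not prove the statement for genus $3$ or $4$.
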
      
 
This is actually a special case of a more general result in which we
consider homomorphisms $\L : \Gamma \to \Diff(S)$ that are not
necessarily lifts.
   
Suppose that $S =S_1 \cup S_2$ where $S_1$ and $S_2$ are
incompressible subsurfaces with disjoint interiors.  Recall that the
relative mapping class group $\mcg(S_i,\partial S_i)$ is the set of
isotopy classes rel $\partial S_i$ of homeomorphisms $h:S_i\to S_i$
that are the identity on $\partial S_i$.  Each such $h$ extends by the
identity to a homeomorphism of $S$.  This induces a homomorphism
$\Phi: \mcg(S_i,\partial S_i) \to \mcg(S).$ If $S$ is given a
hyperbolic structure with $\partial S_i$ a union of geodesics, it is
straightforward to see that any $f \in \Homeo_0(S)$ is actually
homotopic to the identity along geodesics, i.e. with $f_t(x)$ on the
geodesic from $x$ to $f(x)$ determined by the isotopy. From this it
follows that if $f$ is the identity on the complement of $S_i$, then
we may choose a homotopy from $f$ to the identity with the same
property, and hence an isotopy of $f|_{S_i}$ to the identity rel
$\partial S_i$.  Therefore the homomorphism $\Phi$ is injective and we
use it to identify $\mcg(S_i,\partial S_i)$ with a subgroup of
$\mcg(S)$.

\begin{thm}  \label{thm:Gamma} Assume notation as above and that  $\Gamma = \langle\Gamma _1, \mu \rangle$ where
\begin{itemize}
\item $\Gamma_1$ is  a non-trivial finitely generated subgroup of $\mcg(S_1,\partial S_1)$ such that 
$H^1(\Gamma_1)$ has rank zero. 
\item   $\mu \in \mcg(S_2,\partial S_2)$.
\end{itemize}
Then there does not exist a faithful homomorphism $\L : \Gamma \to \Diff(S)$ such that  
\begin{enumerate}
\item $ [\L(\Gamma_1)] \subset \mcg(S_1,\partial S_1))$.
\item  $[\L(\mu )]     \in \mcg(S_2,\partial S)$ is represented by  $\alpha:S \to S$ where $\alpha(S_2)=S_2$  and $\alpha|S_2$ is   pseudo-Anosov.
 \end{enumerate}
\end{thm}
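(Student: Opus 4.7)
Assume for contradiction that such a faithful $\L$ exists, and set $f := \L(\mu) \in \Diff(S)$. Because $\Gamma_1 \subset \mcg(S_1,\partial S_1)$ and $\mu \in \mcg(S_2,\partial S_2)$ admit representatives supported in the disjoint interiors of $S_1$ and $S_2$, they commute in $\mcg(S)$; since $\L$ is a homomorphism, $\L(\Gamma_1) \subset \Cent(f)$. Condition (1) gives that each element of $\L(\Gamma_1)$ is isotopic to one pointwise fixing $S_0 := S_2$, so $\L(\Gamma_1) \subset \H_0$ in the notation of Theorem~\ref{pAcomponent}. That theorem then furnishes, for every periodic point $y \in \Fix(\alpha^n)\cap S_2$, a fixed point $x \in \Fix(\L(\Gamma_1))\cap\Fix(f^n)\cap S$ whose $f^n$-Nielsen class is paired with that of $y$.

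At such an $x$, differentiation gives a representation $\rho\colon \Gamma_1 \to Gl(2,\R)$ by $\rho(g) = D(\L(g))_x$. Since $\L(g)$ commutes with $f^n$ and both fix $x$, the chain rule shows $\rho(g)$ commutes with $D(f^n)_x$. The paired point $y$ is a pseudo-Anosov saddle whose Lefschetz fixed-point index is $1-p \le -1$; this index is isotopy invariant, so $\det(I - D(f^n)_x) < 0$, forcing $D(f^n)_x$ to be hyperbolic with two distinct real eigenvalues and hence non-scalar. Its centralizer in $Gl(2,\R)$ is then the abelian group of diagonal matrices in its eigenbasis, and composing $\rho$ with $\log|\cdot|$ produces a homomorphism $\Gamma_1 \to \R^2$ which must vanish by $H^1(\Gamma_1;\R) = 0$. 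Therefore $\rho(\Gamma_1) \subset \{\pm I\}^2$ is finite.

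The proof closes by combining Thurston's stability theorem with the density of pseudo-Anosov periodic orbits. Take $\Gamma_1'$ to be the characteristic finite-index subgroup of $\Gamma_1$ generated by squares and commutators; by construction $\Gamma_1' \subset \ker\rho$ at every paired fixed point $x$. At each such $x$, Thurston stability applied to $\L(\Gamma_1')$ yields either local triviality near $x$ or a nontrivial homomorphism $\Gamma_1' \to \R$. Density of the $x$'s in $S_2$, together with the fact that the identity set of a diffeomorphism is closed, turns the first alternative into $\L(\Gamma_1')|_{S_2} = \mathrm{id}$; this reduces the question to an analogous lifting problem on $S_1$ and eventually contradicts faithfulness. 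The main obstacle is handling the second alternative: showing that $\Gamma_1'$ can be chosen so that $H^1(\Gamma_1';\R)$ still has rank zero is not automatic from $H^1(\Gamma_1;\R) = 0$ for finite-index subgroups, so the argument requires a careful choice of $\Gamma_1'$ exploiting the $\{\pm I\}^2$ structure of $\rho(\Gamma_1)$, or a more global use of Theorem~\ref{pAcomponent} to bootstrap to the conclusion.
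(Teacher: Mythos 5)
Your opening is the same as the paper's: $f:=\L(\mu)$ commutes with $\L(\Gamma_1)$, condition (1) puts $\L(\Gamma_1)$ inside the group $\H_0$ of Theorem~\ref{pAcomponent}, and that theorem supplies global fixed points of $\L(\Gamma_1)$. But the linear-algebra step that follows rests on an unjustified claim. From the pairing of Nielsen classes you conclude that the Lefschetz index is ``isotopy invariant, so $\det(I - D(f^n)_x) < 0$,'' hence $D(f^n)_x$ is hyperbolic. What is invariant is the \emph{total} index of the Nielsen class of $f^n$ paired with the class of $y$; that class may contain many fixed points whose indices sum to $1-p$, and the particular point $x$ produced by Theorem~\ref{pAcomponent} (via Theorem~\ref{thm: global fixed point}) carries no index or derivative information at all --- the paper even remarks, inside the proof of Theorem~\ref{thm: global fixed point}, that the constructed fixed point is merely not a source or sink and that its index is unknown. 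So $D(f^n)_x$ could perfectly well be the identity or a rotation, and the whole chain ``non-scalar hyperbolic $\Rightarrow$ centralizer diagonal $\Rightarrow$ $\rho(\Gamma_1)\subset\{\pm I\}^2$'' collapses. The closing step has further problems, which you partly acknowledge: the fixed points $x$ are only tied to the $y$'s through Nielsen classes, so there is no reason they are dense in $S_2$ (or anywhere); and replacing $\Gamma_1$ by the finite-index subgroup generated by squares and commutators destroys the hypothesis $H^1(\Gamma_1)$ of rank zero, which is exactly the hypothesis you need to rule out the homomorphism-to-$\R$ alternative in Thurston stability.

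The paper's proof avoids both difficulties with one simple device: since $\Fix(\L(\Gamma_1))$ is infinite, choose a \emph{non-isolated} global fixed point $x$. The determinant of the derivative at $x$ gives a homomorphism $\L(\Gamma_1)\to\R$; if it is trivial, the accumulation of global fixed points at $x$ produces a single vector fixed by every $Dh_x$, so in a suitable basis each $Dh_x$ is unipotent upper triangular, and $h\mapsto n_h$ is another homomorphism to $\R$; if that too is trivial, every $Dh_x$ is the identity and the Thurston stability theorem yields a nontrivial homomorphism $\L(\Gamma_1)\to\R$. Each alternative contradicts the rank-zero assumption on $H^1(\Gamma_1)$ (using faithfulness of $\L$), with no hyperbolicity of $Df$, no density claim, and no passage to finite-index subgroups needed. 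If you want to salvage your approach, this is the missing idea: exploit the infinitude of $\Fix(\L(\Gamma_1))$ through a non-isolated fixed point rather than through the local dynamics of $f$ at a paired fixed point.
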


Theorem~\ref{morita} follows from Theorem~\ref{thm:Gamma} and the fact \cite{kork} that $H^1(\mcg(S_1,\partial S_1)$ has rank zero.    It is conjectured that $H(\Gamma)$ has rank zero for all finite index subgroups $\Gamma$ of $\mcg(S)$.    If that conjecture is verified then Theorem~\ref{thm:Gamma} will imply that no finite index subgroup of $\mcg(S)$ lifts to $\Diff(S)$, which is known  for genus at least $5$ because Morita's original proof applies to all finite index subgroups of $\mcg(S)$.  

We thank Benson Farb for several very helpful conversations.

\section{A Global Fixed Point Theorem}  \label{sec:gfp}

We begin with generalities about attracting fixed points.

Suppose that $f : X \to X$  is a homeomorphism of a locally compact metric space $X$ and that $x_0 \in \Fix(h)$.  We say that $x_0$ is an {\em attracting fixed point} for $f$    if there is a  compact neighborhood $W$ of $x_0$ such that  the $f^n(W) \to \{x_0\}$ in the Hausdorff topology; i.e. for every neighborhood $N$ of $x_0$, we have $f^n(W) \subset N$ for all sufficiently large $n$.   
The {\em basin of attraction of $x_0$ with respect to $f$} is defined
to be $\{x \in X:\lim_{n\to \infty}f^n(x)= x_0\}$.   Note that the basin
of attraction of $x_0$ is $f$-invariant and contains $W$. 

\begin{remark}  \label{intersection is a point} If $W$ is a compact neighborhood of $x_0$ such that $f(W) \subset W$ then $f^n(W) \to \{x_0\}$ in the Hausdorff topology if and only if    $ \cap_{n=1}^\infty f^n(W) = \{x_0\}$.  Thus by  item (3) of Lemma~\ref{attracting nbhds} below, $x_0$ is an attracting fixed point if and only if  it has a compact neighborhood $W$ such that  $f(W) \subset W$ and $ \cap_{n=1}^\infty f^n(W) = \{x_0\}$.  
\end{remark}

    If $x$ is
an attracting fixed point for $f^{-1}$ then it is a {\em repelling}
fixed point for $f$.

\begin{lemma}\label{attracting nbhds} Let $f: X \to X$ be a homeomorphism of a locally compact
metric space with an attracting fixed point $x_0 \in X$ and basin of attraction $U$.    
\begin{enumerate}
\item  For any   compact neighborhood $W_0 \subset U$ of $x_0$, 
\[
\bigcup_{i=0}^\infty f^{-i}(W_0) = \bigcup_{i=0}^\infty f^{-i}(\Int W_0) = U.
\]
\item For any compact set $A \subset U,\ 
\lim_{n \to \infty}f^n(A) = \{x_0\}$ in the Hausdorff topology.
\item There exists arbitrarily small compact neighborhoods $V $ of $x$
such that $f(V) \subset V$.  
\item If $x_0$ is also an attracting
fixed point for a homeomorphism $g:X \to X$ that commutes with $f$
then $U$ is the basin of attraction of $x$ with respect to $g$.
\end{enumerate}
\end{lemma}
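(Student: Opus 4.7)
The plan is to prove the four claims in order, each building on the previous ones.

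For (1), I would first observe that $U$ is $f$-invariant in both directions: the defining condition $f^n(x)\to x_0$ is preserved by replacing $x$ with $f^{\pm 1}(x)$. Hence $f^{-i}(W_0)\subset U$ for every $i\ge 0$, giving $\bigcup_i f^{-i}(W_0)\subset U$. Conversely, for any $x\in U$ we have $f^n(x)\to x_0 \in \Int W_0$, so $f^n(x)\in \Int W_0$ for all large $n$, i.e.\ $x\in f^{-n}(\Int W_0)$. Together these show the three sets coincide.

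For (2), I would combine (1) with compactness of $A$. Given an open neighborhood $N$ of $x_0$, pick a compact attracting neighborhood $W$ with $f^n(W)\subset N$ for $n\ge n_0$, which exists by the definition of attracting fixed point. By (1), $A\subset U = \bigcup_{i\ge 0} f^{-i}(\Int W)$, and compactness of $A$ yields finitely many indices $i_1,\dots,i_k$ with $A\subset \bigcup_j f^{-i_j}(\Int W)$. Then $f^m(A)\subset \bigcup_j f^{m-i_j}(W) \subset N$ for all $m\ge n_0 + \max_j i_j$. Since $N$ was arbitrary, $f^m(A)\to\{x_0\}$ in the Hausdorff topology.

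For (3), given any open neighborhood $N$ of $x_0$, I would start from a compact attracting neighborhood $W\subset N$ and choose $k_0$ with $f^k(W)\subset N$ for all $k\ge k_0$. Set
\[
V \;=\; \{x_0\}\cup\bigcup_{k\ge k_0} f^k(W).
\]
I would verify four properties in turn: $V$ is compact (any open cover contains a set about $x_0$, which contains all but finitely many $f^k(W)$ since these shrink to $x_0$; the remaining finitely many $f^k(W)$ are each compact), $V$ is a neighborhood of $x_0$ (because $f^{k_0}(\Int W)$ is open and contains $x_0$), $f(V)\subset V$ by inspection, and $V\subset N$ by the choice of $k_0$.

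For (4), let $U'$ denote the basin of $x_0$ with respect to $g$. By symmetry it suffices to show $U\subset U'$. Fix $x\in U$ and take a compact attracting neighborhood $W'$ of $x_0$ for $g$. Since $f^n(x)\to x_0\in \Int W'$, there is $n_0$ with $f^{n_0}(x)\in W'\subset U'$, so $g^m(f^{n_0}(x))\to x_0$ as $m\to\infty$. Using $fg=gf$ and continuity of $f^{-n_0}$,
\[
g^m(x) \;=\; f^{-n_0}\bigl(g^m(f^{n_0}(x))\bigr) \;\longrightarrow\; f^{-n_0}(x_0) \;=\; x_0,
\]
so $x\in U'$. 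The main obstacle is likely (3), where one needs a genuinely compact, forward-invariant neighborhood contained in an arbitrary $N$; the other parts reduce to routine applications of compactness, continuity, and the set equality in (1).
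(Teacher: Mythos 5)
Your proof is correct. Parts (1) and (2) follow the paper's route; in fact your (2) is slightly more careful than the paper's write-up, which asserts $A\subset f^{-m}(\Int N)$ for a single $m$ without comment, whereas you get uniformity by working with the defining attracting neighborhood $W$ and taking $m\ge n_0+\max_j i_j$. In (3) the constructions differ: the paper picks $q$ with $f^q(W)\subset \Int W$, forms the \emph{finite} union $V_1=\bigcup_{k=0}^q f^k(W)$ (compact and forward invariant for free), and then uses (2) to replace $V_1$ by $f^l(V_1)\subset N$; you instead take the infinite tail $\{x_0\}\cup\bigcup_{k\ge k_0}f^k(W)$ and verify compactness by the cover argument. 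Both work; the paper's version sidesteps the compactness check, while yours avoids the final push-forward step (note that the existence of your $W\subset N$ uses local compactness together with (2), or you could simply use the original $W$ since only $f^k(W)\subset N$ for $k\ge k_0$ is needed). In (4) the paper argues with nested neighborhoods, using (1) for both $f$ and $g$ and the commutation $g^j f^{-i}=f^{-i}g^j$ to show $f^{-i}(N)\subset g^{-j}(\Int N)$, whereas you give a direct pointwise argument, writing $g^m(x)=f^{-n_0}\bigl(g^m(f^{n_0}(x))\bigr)$ and invoking continuity of $f^{-n_0}$; your version is a bit more elementary and equally valid.
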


\proof Let $W$ be a compact neighborhood of $x_0$ as in the definition
of attracting fixed point.

Suppose that $W_0 \subset U$ is a compact neighborhood of
$x_0$ .  Since $U$ is $f^{-1}$-invariant, $\cup_{i=0}^\infty f^{-i}(W_0) 
\subset U$.  Conversely, if $x \in U$ then $f^i(x) \in \Int W_0$
for all sufficiently large $i$.  This proves that $ U \subset
\cup_{i=0}^\infty f^{-i}(\Int W_0) \subset \cup_{i=0}^\infty f^{-i}(W_0)
\subset U$ which proves   (1).

For (2), suppose  that $A$ is a compact subset of 
$U $ and that   $N$  is a compact neighborhood of $x$.  Then $U=   \cup_{i=0}^\infty f^{-i}(\Int N)$ by (1) and hence  $A \subset f^{-m}(\Int N)$ for some $m >0$.  Thus $f^{ i}(A) \subset  (N)$ for all $i \ge m$ and
we conclude $\lim_{n \to \infty}f^n(A) \subset \cap_{i=0}^\infty f^{i}(W)$.  This proves (2).

For (3), choose $q > 0$
such that $f^q(W) \subset \Int(W)$  and define
$V_1 = \bigcup_{k=0}^q f^k(W)\subset U.$
 Then   $f(V_1) \subset V_1$.  Given a  neighborhood  $N$  of $x$ choose $l \ge 0$ so that $f^l(V_1) \subset N$  and let     $V = f^l(V_1)$.  Then  $V \subset N$  and $f(V) \subset V$.
 
Suppose now that $g$ is as in (4) and that $U'$ is the basin of
attraction of $x_0$ with respect to $g$.  For any compact neighborhood
$N \subset U \cap U'$ of $x_0$,\ $U = \cup_{i=0}^\infty f^{-i}(\Int N)$
and $U' = \cup_{j=0}^\infty g^{-j}(\Int N)$ by (1).  For all $i\ge 0$
there exists $j\ge 0$ so that $g^j(N) \subset f^i(\Int N)$.  Thus
$g^j(f^{-i}(N)) = f^{-i}(g^j(N)) \subset \Int N$ or equivalently
$f^{-i}( N) \subset g^{-j}(\Int N)$. This proves that $U \subset U'$.
The reverse inclusion follows by symmetry.  \endproof

\begin{lemma} \label{lem: attracting2} Let $f: X \to X$ be a homeomorphism of a locally compact
metric space and let $x_0 \in X$ be   an attracting fixed point for  $f$. If $g: X \to X$ is a homeomorphism that commutes with $f$ and fixes $x_0$ then there exists $m >0$ such that $x_0$ is an attracting fixed point for $h = f^m g$.    
\end{lemma}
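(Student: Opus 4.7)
The plan is to exhibit a compact neighborhood $W$ of $x_0$ and an integer $m>0$ such that $h:=f^m g$ satisfies $h(W)\subset W$ and $\bigcap_{n\ge 0}h^n(W)=\{x_0\}$; by Remark~\ref{intersection is a point}, this will show that $x_0$ is an attracting fixed point of $h$.

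First I would choose a compact neighborhood $V$ of $x_0$ satisfying three conditions simultaneously: $V\subset U$ (the basin of attraction of $x_0$ under $f$), $g(V)\subset U$, and $f(V)\subset V$. The third is provided by Lemma~\ref{attracting nbhds}(3), and the first two are arranged by taking $V$ small, since $U$ and $g^{-1}(U)$ are both open neighborhoods of $x_0$ (the latter by the continuity of $g$ at $x_0$). Because $g(V)$ is then a compact subset of $U$, Lemma~\ref{attracting nbhds}(2) yields $f^n(g(V))\to\{x_0\}$ in the Hausdorff topology, so for all sufficiently large $m$ we have $f^m(g(V))\subset\Int V$. Fixing such an $m$ gives $h(V)=f^m g(V)\subset\Int V$, so $V$ is a trapping region for $h$.

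The main step is to verify $K:=\bigcap_{n\ge 0}h^n(V)=\{x_0\}$. The set $K$ is compact, satisfies $h(K)=K$, and lies in $V\subset U$. Since $f$ and $g$ commute, $f$ and $h$ commute as well, which makes $f(K)$ another strictly $h$-invariant compact set; combined with $f(K)\subset f(V)\subset V$ and the fact that $K$ is the maximal $h$-invariant compact subset of $V$ (since any such subset $A$ satisfies $A=h^{-n}(A)\subset V$, hence $A\subset h^n(V)$ for all $n\ge 0$), this gives $f(K)\subset K$. So $K$ is $f$-forward invariant, and Lemma~\ref{attracting nbhds}(2) yields $f^n(K)\to\{x_0\}$. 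Symmetrically, the identity $h^n=f^{mn}g^n$ together with $h^n(K)=K$ gives $g^{-n}(K)=f^{mn}(K)\to\{x_0\}$, exhibiting $g^{-n}(K)$ as a nested decreasing sequence of compact subsets of $K$ shrinking to $\{x_0\}$.

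The main obstacle is converting these two directional contractions into the collapse $K=\{x_0\}$. My plan here is to work with a point $y\in K$ and exploit that the entire backward $h$-orbit $\{h^{-n}(y)\}_{n\ge 0}$ lies in $K\subset V$, a compact subset of $U$. The commutation identity $h^{-n}=g^{-n}f^{-mn}$ rearranges to $g^{-n}(y)=f^{mn}(h^{-n}(y))$; since $f^{mn}$ converges to $x_0$ uniformly on $V$ (Lemma~\ref{attracting nbhds}(2) again), this forces $g^{-n}(y)\to x_0$. Combining this with the maximality characterization of $K$ and the fact that $K$ is strictly $h$-invariant should rule out any $y\ne x_0$, yielding $K=\{x_0\}$ and completing the proof.
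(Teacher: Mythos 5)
Your setup is sound and close to the paper's: you build a trapping region $V$ with $f(V)\subset V$ and choose $m$ with $f^m(g(V))\subset \Int V$, you correctly get $h(K)=K$ for $K=\bigcap_{n\ge 0}h^n(V)$, and your maximality argument for $f(K)\subset K$ is valid, as are the identities $g^{-n}(K)=f^{mn}(K)$ and $g^{-n}(y)=f^{mn}(h^{-n}(y))\to x_0$. But the decisive step is missing. Everything you have established --- $f^n(K)\to\{x_0\}$ in the Hausdorff topology and $g^{-n}(y)\to x_0$ pointwise on $K$ --- holds for \emph{any} compact $f$-forward-invariant subset of the basin, whether or not it is a single point, so these facts alone cannot force $K=\{x_0\}$; the collapse has to come from an extra use of $h(K)=K$, and your final paragraph only asserts that the pieces ``should rule out any $y\ne x_0$'' without supplying that argument. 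What is actually needed is the reverse inclusion $K\subset f(K)$ (equivalently $f^{-1}(K)\subset K$): once $f(K)=K$, Lemma~\ref{attracting nbhds}(2) gives $K=f^n(K)\to\{x_0\}$, hence $K=\{x_0\}$, and Remark~\ref{intersection is a point} finishes. This is precisely where the paper's proof does its work: it chooses $m$ so that the stronger containment $h(V)=f^m(g(V))\subset f(V)$ holds, deduces $f^{-1}h^n(V)\subset h^{n-1}(V)$ by commutativity, and intersects to get $f^{-1}(\Lambda)\subset\Lambda$. Your weaker condition $h(V)\subset\Int V$ does not yield this, and nothing in your sketch replaces it.

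The good news is that your framework admits a one-line repair in the same spirit: enlarge $m$ by one, i.e.\ require $f^{m-1}(g(V))\subset \Int V$ (still available since $f^n(g(V))\to\{x_0\}$). Then $f^{-1}(K)=f^{m-1}g(K)\subset f^{m-1}g(V)\subset V$, and $f^{-1}(K)$ is compact with $h(f^{-1}(K))=f^{-1}(h(K))=f^{-1}(K)$, so your own maximality characterization gives $f^{-1}(K)\subset K$. Combined with $f(K)\subset K$ this yields $f(K)=K$ and hence $K=\{x_0\}$, completing the proof along your lines; without some such step, the pointwise convergences $g^{-n}(y)\to x_0$ do not close the argument.
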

\begin{proof}
Let $U$ be the basin of attraction for $x_0$ with respect to $f$.  Then $g(U)$ is the basin of attraction for $x_0$ with respect to $gfg^{-1} = f$ which implies that $g(U) = U$.     
By part (3) of Lemma~\ref{attracting nbhds} there is a compact
neighborhood $V$ of $x_0$ such that $f(V) \subset V$.  
By part (2) of the same lemma  there is $m>0$ such that $f^m(g(V))
\subset f(V)$. 

Define $h = f^m g$.  Thus $h(V) \subset f(V) \subset
V.$ Applying $h^{n-1}$ we conclude $h^n(V) \subset f h^{n-1}(V)$ for
all $n$.  Hence
\[
f^{-1}h^n(V) \subset h^{n-1}(V)
\]
for all $n$.  

Let $\Lambda$ be the non-empty compact set $\cap_{n \ge 0} h^n(V).$
Then $f(\Lambda) \subset \Lambda$ because $f(V) \subset V$.  Also, the
displayed inclusion above implies $f^{-1}(\Lambda) \subset \Lambda$.
We conclude that $f(\Lambda) = \Lambda$ so by part (2) of
Lemma~\ref{attracting nbhds} the only possibility is that $\Lambda =
\{x_0\}$.  By Remark~\ref{intersection is a point}, this proves that $x_0$ is an attracting fixed point for
$h$.  
\end{proof}

We now turn to the proof of our main result.
\vspace{.1in}


\begin{proof}[\bf Proof of Theorem~\ref{thm: global fixed point}]
The points of $K$ are attractors or repellers for $f$ and hence {\it
a fortiori} attractors or repellers for $f|_{\partial D}$.  Hence
there must be an equal number of attractors and repellers which
alternate on $\partial D.$ We will denote the attractors $\{p_1,\dots,
p_k\}$ and the repellers $\{q_1,\dots, q_k\}$ in their circular order.

For simplicity it is useful to consider the two sphere $S^2$ obtained
by doubling $D$ along its boundary.  There is a natural extension
of $f$ to $S^2$ which we will also denote by $f$.  Then 
$\{p_1,\dots, p_k\}$ are attracting fixed points of $f:S^2 \to S^2$
and $\{q_1,\dots, q_k\}$ are repelling fixed points.  Each element
of $\G_0$ also extends in a natural way to $S^2$ and abusing notation
we will denote this group by $\G_0$. 

Let $\H$ be the set of elements of $\langle \G_0,f \rangle$ for which
each $p_i$ is an attractor and each $q_i$ is a repeller.  Then $f \in
\H$ and by Lemma~\ref{lem: attracting2}, for all $g \in \G_0$ there
exists $m > 0$ such that $f^mg \in \H$.  Our goal is to find $y \in
S^2$, not in the set $\{p_1,\dots, p_k\} \cup \{q_1,\dots, q_k\}$,
such that $y \in \Fix(h)$ for each $h \in \H$.  We then observe that
$y \in \Fix(f) \cap \Fix(f^mg) \subset \Fix(g)$ for all $g \in \G_0$
which will complete the proof.
   
\begin{remark}   
An easy index argument shows that for each $h \in \H$ there is at
least one element of $\Fix(h)$ that has negative index and so is
neither a source nor a sink.  The challenge here is to find a single
point that works for all $h$.  The point we find in $\Fix(\H)$ will be
shown to be neither a source nor a sink for any element $h \in \H$,
but we do not know about its index.
\end{remark}

Lemma~\ref{attracting nbhds}-(4) and the fact that each $h \in \H$
commutes with $f$ imply that the basin of attraction $U$ for $p_1$
with respect to $h \in \H$ is independent of $h$.  By
Lemma~\ref{attracting nbhds}-(1), $U$ can be written as an increasing
union of open disks and so is connected and simply connected.  The
fact that there is another attractor $p_2$ implies that the frontier
of $U$ is not a single point.

 We will be interested in two compactifications of $U$.  The first is
$\bar U$, the closure of $U$ in $S^2$ and the second is $\hat U$, the
{\em prime end compactification}.  The set $\Gamma = \widehat U \setminus U$ of
{\em prime ends} is topologically a circle. Each homeomorphism $h|_U:U \to U$
extends to homeomorphism $\hat h: \widehat U \to \widehat U$. Moreover,
 \begin{itemize}
 \item[$\ast$)] For each continuous arc $\gamma: [0,1] \to  \bar U$ with
$\gamma([0,1)) \subset U$ and $\gamma(1)$ in the frontier of $U$   there is a continuous arc $\widehat \gamma: [0,1] \to \widehat U$ with $\widehat
\gamma(t) = \gamma(t)$ for $t \in [0,1).$ The point $\gamma(1)$ is
called an {\em accessible point} of the frontier of $U$ and $\widehat \gamma(1)$
is a prime end corresponding to it (there may be more than one prime
end corresponding to an accessible point).
\end{itemize}
These properties  go back to Caratheodory.  An excellent
modern exposition can be found in Mather's paper \cite{M}. In
particular see \S 17 of \cite{M} for a discussion of accessible points. 
 
   Let $C$ denote the arc in $\partial D$ joining $p_1$ and $q_1$ and
not containing any other points of $\Fix(h).$  Then $C \subset \bar U$
and all but the endpoint $q_1$ of $C$ lies in $U$.  By ($\ast$), the half open arc 
$C \cap U$ converges to a   prime end
$\hat w \in \Gamma$  that is fixed by each $\hat h$ since $C$ is  $h$-invariant for each $h$.

Our first objective is to show that $\hat w$ is a repelling fixed
point for  each $\hat h|_\Gamma$ or equivalently an
attracting fixed point for each $\hat h^{-1}|_\Gamma.$ For this, there is no  loss in  replacing $h$ by an iterate so  by applying Lemma~\ref{attracting nbhds}-(2) we may choose  a disk neighborhood $D_1$ of $q_1$   in the  basin of attraction of $q_1$ with respect to $h^{-1}$ such that $h^{-1}(D_1) \subset \Int(D_1)$.
Let $\gamma_0$ denote the component of  $\partial D_1 \cap U$ which intersects $C$,  and let   $\gamma_i =
h^{-i}(\gamma_0).$   Note that each $\gamma_i$ intersects $C$ and is  the interior of a closed path $\bar \gamma_i \subset \bar U$ and so by $(\ast)$ is the  interior of a closed path $\widehat \gamma_i \subset \hat U$.  The  $\gamma_i$'s separate $U$ into two complementary components $V_i$ and $W_i$ with $W_{i+1}  \subset W_i$ and  $\cap_{i=1}^\infty W_i = \emptyset$ (where the last property follows from   Remark~\ref{intersection is a point}) .    
By Corollary 4 of \cite{M}, the $\hat \gamma_i$'s converge to    a single prime end, which must be $\hat w$ since each $\hat \gamma_i$ intersects $C$.  The $\hat h^{-1}$-orbit of the endpoints of $\hat \gamma_i$ converge to $\hat w$.  This proves that $\hat w$ is an  attractor for $\hat h^{-1}|_\Gamma$.

The next step is to find a prime end $\hat y$ that is fixed by each
$\hat h$ and that does not come from $\partial D$.  Since each $\hat h$
commutes with $\hat f$, Lemma~\ref{attracting nbhds}-(4) implies that the
interval of attraction of $\hat w$ with respect to $\hat h^{-1}$ is
independent of $h$.  Let $\hat y$ be one endpoint of this interval of
attraction and let $J$ be the interval in $\Gamma$ with endpoints
$\hat w$ and $\hat y$ which lies in the basin of $\hat w$.  The only
fixed points of $\hat h|_J$ are the endpoints and $\hat w$ is a
repeller while $\hat y$ is an attractor.
  
  Now we show how to extract a (not necessarily unique)  point $y \in \bar U$ from $\hat y$ that is fixed by each $h$.   By Corollary~11 and Theorem~13 of \cite{M}  there is a sequence of disjoint closed arcs $\bar \alpha_i \subset \bar U$  with interior  $\alpha_i \subset U$ and endpoints in the frontier of $U$ (in $S^2$)  and there exists  $y$ in the frontier of $U$ such that 
  \begin{itemize}
  \item   $ \bar \alpha_i \to y $    (in the Hausdorff topology on $S^2$) 
  \item there is a component $Z_i$ of  $(U \setminus \alpha_i)$ such that $Z_{i+1} \subset Z_i$ and $\cap_{i=1}^{\infty}Z_i = \emptyset$
  \item     $\alpha$ extends to a closed arc $\hat \alpha_i$ in $\hat U$ such that $\hat \alpha_i$ converges to $\hat y$ ( in the Hausdorff topology on $\hat U$)
  \end{itemize}

Clearly if we show $h(\alpha_i) \cap \alpha_i \ne \emptyset$ then it
follows that $y \in \Fix(h).$ One of the endpoints of $\hat \alpha_i$,
call it $\hat x_i$, lies in $J$.  Since $\hat h$ has no fixed points
in the interior of $J$ it follows that
\[
\lim_{n\to \infty} \hat h^n(\hat x_i) = \hat y \text{ and }
\lim_{n\to -\infty} \hat h^n(\hat x_i) = \hat w.
\]
On the other hand each $z \in \alpha_i$ lies in 
$U$, the basin of attraction of $p_1,$ so
\[
\lim_{n\to \infty}h^n(z) = p_1.
\]
Since $\hat \alpha_i$ separates $\hat y$ and $p_1$ in $\hat U$ we
conclude that $h^n(\alpha_i) \cap \alpha_i \ne \emptyset$ for large
$n$ (and hence also $\alpha_i \cap h^{-n}(\alpha_i) \ne \emptyset$).
But this implies $h^k(\alpha_i) \cap \alpha_i \ne \emptyset$ for all
$k \in \Z$ because $h^k(\alpha_i) \cap \alpha_i = \emptyset$ and $k
\ne 0$ would imply that for all $n>0$ either $h^{nk}(\alpha_i) \subset
Z_i$ or $h^{-nk}(\alpha_i) \subset Z_i$ which is a contradiction.
Hence the point $y$ is fixed by each $h$.

Moreover, for any $h \in \H$ we have $h^k(\alpha_i) \cap \alpha_i \ne
\emptyset$ for all $k \in \Z$.  This implies $y$ is not an attractor
for either $h$ or $h^{-1}$ since otherwise, choosing $A = \alpha_i$ for some
large $i$, we would contradict Lemma~\ref{attracting nbhds}-(2). 
We conclude $y$ is not contained in $ \{p_1,\dots, p_k\}
\cup \{q_1,\dots, q_k\}$.
\end{proof}

\section{Applications}  \label{sec:morita}

\noindent{\bf Proof of Theorem~\ref{cor:pA centralizer} and of Theorem~\ref{pAcomponent} }   We assume the notation of Theorem~\ref{pAcomponent} and allow the possibility that $S_0 =  S$.
 
We use the standard setup for discussing Nielsen classes in surfaces.
The universal cover $\ti S$ of $S$ is topologically the interior of a
disk and is compactified to a closed disk $D$ by the \lq circle at
infinity\rq\ $S_{\infty}$.  Every lift $\ti g :\ti S\to \ti S $ of
every homeomorphism $g:S \to S$ extends to a homeomorphism $\hat g : D
\to D$.  If an isotopy between $g_1$ and $g_2$ is lifted to an isotopy
between lifts $\ti g_1$ and $\ti g_2$ then $\hat g_1|_{S_{\infty}} =
\hat g_2|_{S_{\infty}}$.

Choose a component $\ti S_0$ of the full pre-image of $S_0$ in   $\ti S$ and let $C$ be the intersection of the closure of $\ti S_0$ in $D$ with $S_{\infty}$.   Then $C$ is a Cantor set if $S \ne S_0$ and   $C = S_{\infty}$ if $S = S_0$.    Since $C$ contains at least three points there is at most one lift of any homeomorphism that pointwise fixes $C$.  

Given $h \in \H_0$ choose  $g:S \to S$ that is isotopic to $h$ and pointwise fixes $S_0$ and let $\ti g: \ti S \to \ti S$ be the lift of $g$ that pointwise fixes $\ti S_0$.    The isotopy from $h$ to $g$ lifts to an isotopy from $\ti h$ to a lift $\ti g$  of $g$ satisfying $\hat h|_{C} = \hat g|_{C} = $identity.  The assignment $h \to \hat  h$ defines a lift $\widehat{\H_0} \subset \Homeo(D)$ of $\H_0$.  

Given $y \in \Fix(\alpha^n) \cap S_0$, choose a lift $\ti y \in \ti
S_0$ of $y$, let $A : \ti S \to \ti S$ be the lift of $\alpha^n$ that
fixes $\ti y$ and note that $A$ preserves $\ti S_0$.  Then $C$ is
$\hat A $-invariant and $\Fix(\hat A|_{S_{\infty}})$ is a finite
subset of $C$ with more than two elements. Moreover, each point of
$\Fix(\hat A|_{S_{\infty}})$ is either an attracting or repelling
fixed point for $\hat A: D \to D$ (see, for example, Theorem 5.5 of
\cite{CB} or \cite{HT}).  The isotopy from $\alpha^n$ to $f^n$ lifts
to an isotopy from $A $ to a lift $F$ of $f^n$ such that $\hat F =
\hat A$.  The commutator $[\hat F, \hat h]$ is the identity because it
fixes each point in $C$ and is the extension of a lift of $[f^n,h]=$
identity.  This proves that $\hat F$ commutes with each $\hat h$.

    Theorem~\ref{thm: global fixed point} applies to $\G = \G_0 = \langle \widehat{\H_0}, F\rangle$.  We conclude that there exists $\ti x \in \Fix(\widehat{\H_0}) \cap \Fix(F) \cap \ti S$.   The image $x \in S$ of $\ti x$ satisfies the conclusions of the theorem.
\qed

\vspace{.1in}
\noindent{\bf Proof of Theorem~\ref{thm:Gamma}}    We assume that   there  is a faithful homomorphism $\L : \mcg(S) \to \Diff(S)$ satisfying (1) and (2) and    prove that there is a non-trivial homomorphism from $\L(\Gamma_1)$ to $\R$, thereby contradicting the assumption that $H^1(\Gamma_1 )$ has rank zero.    

    By hypothesis,  each $h \in \L(\Gamma_1)$ is  isotopic  to a homeomorphism that is the identity on $S_2$ and $f := \L(\mu)$ is isotopic to a homeomorphism $\alpha$ such that $\alpha(S_2) = S_2$ and $\alpha|_{S_2}$ is pseudo-Anosov.   Also $f$ commutes with each $h \in \L(\Gamma_1)$ because $\mu$ commutes with each element of $\Gamma_1$.    Theorem~\ref{pAcomponent} implies that   $\Fix( \L(\Gamma_1))$ is infinite. 
    
     Choose a  non-isolated point $x$ of $\Fix(\L(\Gamma_1))$.     
  The assignment     $h \to \det(Dh_x)$ defines a homomorphism from  $\L(\Gamma_1)$ to $\R$.  If this is non-trivial we are done.  Otherwise, each   $Dh_x$ has determinant one.   Since $x$ is the limit of global fixed points there is a vector based at $x$ that is fixed by each $Dh_x$.  Thus there is a basis for the tangent space of $S$ at $x$ with respect to which each $Dh_x = \left(
\begin{array}{cc}
1 & n_h\\
0 & 1 \\
\end{array}
\right)$.   The map $h  \to n_h$ defines a homomorphism from $\L(\Gamma_1)$ to $\Z$.  If it is non-trivial we are done. Otherwise $n_h = 0$  and   $Dh_x$ is the identity for all $h\in \L(\Gamma_1)$.   The  existence of a  non-trivial homomorphism from $\L(\Gamma_1)$ to $\R$ now follows from the Thurston stability theorem (\cite{thurs}, see also Theorem~3.4 of \cite{fr:survey}).
\qed

\end{document}